\newtheorem{theorem}{Theorem}
\theoremstyle{plain}
\newtheorem{corollary}{Corollary}
\newtheorem{definition}{Definition}
\newtheorem{remark}{Remark}
\numberwithin{equation}{section}
\begin{document}
\title[Homogeneous Beta-type functions]{Homogeneous Beta-type functions\\
}
\author{Martin Himmel}
\curraddr{Faculty of Mathematics, Computer Science and Econometrics
University of Zielona G\'{o}ra, Szafrana 4A, PL 65-516 Zielona G\'{o}ra,
Poland}
\email{himmel@mathematik.uni-mainz.de}
\author{Janusz Matkowski}
\curraddr{Faculty of Mathematics, Computer Science and Econometrics
University of Zielona G\'{o}ra, Szafrana 4A, PL 65-516 Zielona G\'{o}ra,
Poland}
\email{J.Matkowski@wmie.uz.zgora.pl }

\begin{abstract}
All beta-type functions, i.e. the functions $B_{f}:\left( 0,\infty \right)
^{2}\rightarrow \left( 0,\infty \right) $ of the form%
\begin{equation*}
B_{f}\left( x,y\right) =\frac{f\left( x\right) f\left( y\right) }{f\left(
x+y\right) }
\end{equation*}%
for some $f:\left( 0,\infty \right) \rightarrow \left( 0,\infty \right) ,$
which are $p$-homogeneous, are determined. Applying this result, we show
that a beta-type function is a homogeneous mean iff it is the harmonic one.
A reformulation of a result due to Heuvers in terms of a Cauchy difference
and the harmonic mean is given.
\end{abstract}

\maketitle

\QTP{Body Math}
$\bigskip $\footnotetext{\textit{2010 Mathematics Subject Classification. }%
Primary: 33B15, 26B25, 39B22.
\par
\textit{Keywords and phrases: }Beta function, Gamma function, beta-type
function, pre-mean, mean, homogeneity, functional equation.}

\section{Introduction}

For a given $f:\left( 0,\infty \right) \rightarrow \left( 0,\infty \right) ,$
the function $B_{f}:\left( 0,\infty \right) ^{2}\rightarrow \left( 0,\infty
\right) $ defined by%
\begin{equation*}
B_{f}\left( x,y\right) =\frac{f\left( x\right) f\left( y\right) }{f\left(
x+y\right) },\text{ \ \ \ \ \ \ }x,y>0,
\end{equation*}%
is called the \textit{beta-type function,} and $f$ is called its \textit{%
generator} (\cite{MatHim}). The notion the beta-type function arises from
the well-known relation between the Euler Beta function $B:\left( 0,\infty
\right) ^{2}\rightarrow \left( 0,\infty \right) $ and the Euler Gamma
function $\Gamma :$ $\left( 0,\infty \right) \rightarrow \left( 0,\infty
\right) $ 
\begin{equation*}
B\left( x,y\right) =\frac{\Gamma \left( x\right) \Gamma \left( y\right) }{%
\Gamma \left( x+y\right) },\text{ \ \ \ \ \ }x,y>0.
\end{equation*}%
Given $p\in \mathbb{R}$, we examine when the beta-type function $B_{f}$ is $%
p $-homogeneous, i.e. when%
\begin{equation*}
B_{f}\left( tx,ty\right) =t^{p}B_{f}\left( x,y\right) ,\text{ \ \ \ \ \ }%
x,y>0.
\end{equation*}%
Theorem \ref{theo:HomogeneityBetaType}, the main result, says that, under
some regularity assumptions of the generator $f$, the beta-type function is $%
p$-homogeneous if, and only if, there exist $a,b>0$ such that $f\left(
x\right) =bxa^{x}$ for all $x>0$. As a corollary we obtain that a beta-type
function is a homogeneous pre-mean if, and only if, there exists $a>0$ such
that $f\left( x\right) =2xa^{x}$ for all $x>0$, or, equivalently, that $%
B_{f} $ is the harmonic mean, that is $B_{f}=H,$ where 
\begin{equation*}
H\left( x,y\right) =\frac{2xy}{x+y},\text{ \ \ \ \ \ }x,y>0.
\end{equation*}

A related companion of the beta-type function is the Cauchy difference $%
C_{g}:\left( 0,\infty \right) ^{2}\rightarrow \mathbb{R}$ defined by%
\begin{equation*}
C_{g}\left( x,y\right) =g\left( x+y\right) -g\left( x\right) -g\left(
y\right)
\end{equation*}%
for a function $g:\left( 0,\infty \right) \rightarrow \mathbb{R}$. The
relationship 
\begin{equation*}
B_{f}=\exp \circ \left( -C_{\log \circ f}\right)
\end{equation*}%
allows to reformulate Theorem 1 in terms of logarithmical homogeneity of the
Cauchy difference (Corollary 3).

At the end we remark that Heuvers result \cite{Heuvers} on a
characterization of logarithmic functions can be reformulated in terms of
the Cauchy difference and the harmonic mean.

\section{Main result}

\begin{theorem}
\label{theo:HomogeneityBetaType} Let a function $f:\left( 0,\infty \right)
\rightarrow \left( 0,\infty \right) $ be continuous or Lebesgue measurable.
Then the following conditions are equivalent:

(i) the beta-type function $B_{f}$ is $p$-homogeneous, i.e.%
\begin{equation*}
B_{f}\left( tx,ty\right) =t^{p}B_{f}\left( x,y\right) ,\text{ \ \ \ \ \ \ }%
x,y,t>0;
\end{equation*}

(ii) there exist $a,b\in \left( 0,\infty \right) $ such that%
\begin{equation*}
f\left( x\right) =bxa^{x},\ \ \ \ x>0
\end{equation*}%
and 
\begin{equation*}
B_{f}\left( x,y\right) =b\left( \frac{xy}{x+y}\right) ^{p},\text{ \ \ \ \ \ }%
x,y>0.
\end{equation*}
\end{theorem}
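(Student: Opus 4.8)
The plan is to prove the substantive direction (i)$\Rightarrow$(ii); the converse is immediate, since
\begin{equation*}
B_{f}(tx,ty)=b\left(\frac{(tx)(ty)}{tx+ty}\right)^{p}=b\left(\frac{t\,xy}{x+y}\right)^{p}=t^{p}B_{f}(x,y),\qquad x,y,t>0 .
\end{equation*}

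For (i)$\Rightarrow$(ii), the first move is to pass to logarithms. Put $g:=\log\circ f$; since $f>0$ and $\log$ is continuous, $g$ is continuous (respectively Lebesgue measurable) exactly when $f$ is. Using the identity $B_{f}=\exp\circ(-C_{g})$ recorded in the Introduction, where $C_{g}(x,y)=g(x+y)-g(x)-g(y)$, the $p$-homogeneity of $B_{f}$ is equivalent to
\begin{equation*}
C_{g}(tx,ty)=C_{g}(x,y)-p\log t,\qquad x,y,t>0 .
\end{equation*}
The decisive step is to absorb the inhomogeneous term $p\log t$: set $k(x):=g(x)-p\log x$. A one-line computation then shows that the Cauchy difference of $k$ is homogeneous of degree zero, that is, $C_{k}(tx,ty)=C_{k}(x,y)$ for all $x,y,t>0$.

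Now I would read off the shape of $k$. Writing the degree-zero homogeneity of $C_{k}$ as
\begin{equation*}
k\bigl(t(x+y)\bigr)-k(x+y)=\bigl(k(tx)-k(x)\bigr)+\bigl(k(ty)-k(y)\bigr)
\end{equation*}
and holding $t$ fixed, one sees that $K_{t}\colon s\mapsto k(ts)-k(s)$ satisfies the Cauchy equation $K_{t}(x+y)=K_{t}(x)+K_{t}(y)$. Since $k$, and hence $K_{t}$, is continuous (respectively Lebesgue measurable), the classical regularity theorem for Cauchy's equation gives $K_{t}(s)=c(t)\,s$ for a constant $c(t)$. Putting $s=1$ identifies $c(t)=k(t)-k(1)$, so, with $m:=k-k(1)$, we get $m(ts)=m(s)+s\,m(t)$ and $m(1)=0$. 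Interchanging $s$ and $t$ and subtracting gives $m(s)(1-t)=m(t)(1-s)$, whence $m(s)=\alpha_{0}(1-s)$ for a constant $\alpha_{0}$; that is, $k$ is affine, $k(x)=\alpha x+\beta$. Therefore $g(x)=\log f(x)=\alpha x+p\log x+\beta$, which exponentiates to $f$ in the form required by (ii); and since then $C_{k}\equiv-\beta$, the identity $B_{f}=\exp\circ(-C_{g})$ yields $B_{f}(x,y)=e^{\beta}\left(\frac{xy}{x+y}\right)^{p}$, i.e. the displayed formula with $b=e^{\beta}$, $a=e^{\alpha}$.

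The regularity hypothesis is used at only one point — that a Lebesgue measurable (or continuous) additive function on $(0,\infty)$ must be linear — and the rest is algebra. Accordingly, the real difficulty is not technical but conceptual: one must hit on the normalization $k=g-p\log$, which turns $p$-homogeneity of $B_{f}$ into degree-zero homogeneity of a Cauchy difference, and then recognize that the latter is, for each fixed dilation $t$, an instance of the Cauchy equation in the remaining variables. Once this is seen, identifying $c(t)$, solving $m(ts)=m(s)+s\,m(t)$, and performing the final substitutions are all routine.
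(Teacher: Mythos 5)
Your proof is correct and follows essentially the same route as the paper's: after you normalize away the $p\log t$ term, your $K_{t}$ is exactly the paper's additive function $\omega _{t}$ (equivalently $\log \varphi _{t}$), and the regularity theorem for Cauchy's equation together with the symmetry trick in the two dilation variables finishes the argument just as in the paper — your version is in fact slightly more streamlined, since it bypasses the paper's preliminary detour through the exponential functions $\varphi _{t}$ and their additive representations $\alpha _{t}$. One remark: what you (correctly) derive is $f\left( x\right) =bx^{p}a^{x}$, which agrees with the paper's own proof and with the displayed formula for $B_{f}$; the form $f\left( x\right) =bxa^{x}$ appearing in the statement of condition (ii) is evidently a typo.
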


\begin{proof}
Assume $(i)$ holds. Hence, by the definition of \ $B_{f}$, we have 
\begin{equation}
\frac{f\left( tx\right) f\left( ty\right) }{f\left( t\left( x+y\right)
\right) }=t^{p}\frac{f\left( x\right) f\left( y\right) }{f\left( x+y\right) }%
,\text{ \ \ \ \ \ }x,y,t>0,  \label{HomBTF1}
\end{equation}%
which can be written in the form%
\begin{equation}
\frac{f\left( t\left( x+y\right) \right) }{t^{p}f\left( x+y\right) }=\frac{%
f\left( tx\right) }{t^{p}f\left( x\right) }\frac{f\left( ty\right) }{%
t^{p}f\left( y\right) },\text{ \ \ \ \ \ }x,y,t>0.  \label{HomBTF2}
\end{equation}%
For every fixed $t>0$ define $\varphi _{t}:\left( 0,\infty \right)
\rightarrow \left( 0,\infty \right) $ by%
\begin{equation*}
\varphi _{t}\left( x\right) :=\frac{f\left( tx\right) }{t^{p}f\left(
x\right) },\text{ \ \ \ \ \ \ }x>0.
\end{equation*}%
Thus, from (\ref{HomBTF2}), for arbitrary fixed $t>0,$ it holds%
\begin{equation*}
\varphi _{t}\left( x+y\right) =\varphi _{t}\left( x\right) \varphi
_{t}\left( y\right) ,\text{ \ \ \ \ \ }x,y>0,
\end{equation*}%
stating that $\varphi _{t}$ is an exponential function. Hence (see, for
instance, \cite{Aczel} p. 39), for every $t>0,$ there exists a unique
additive function $\alpha _{t}:\mathbb{R}\rightarrow \mathbb{R}$ such that 
\begin{equation*}
\varphi _{t}\left( x\right) =e^{\alpha _{t}\left( x\right) },\text{ \ \ \ \
\ }x>0.
\end{equation*}%
From the definition of $\varphi _{t}$, we have%
\begin{equation*}
e^{\alpha _{t}\left( x\right) }t^{p}f\left( x\right) =f\left( tx\right) ,%
\text{ \ \ \ \ \ }x>0.
\end{equation*}%
Since the right hand side is symmetric in $x$ and $t,$ so is the left hand
side; thus 
\begin{equation*}
e^{\alpha _{x}\left( t\right) }x^{p}f\left( t\right) =f\left( xt\right)
=f\left( tx\right) =e^{\alpha _{t}\left( x\right) }t^{p}f\left( x\right) ,%
\text{ \ \ \ \ \ }x,t>0.
\end{equation*}%
Setting here $t=1$ gives%
\begin{equation*}
e^{\alpha _{1}\left( x\right) }f\left( x\right) =f\left( x\right) =e^{\alpha
_{x}\left( 1\right) }x^{p}f\left( 1\right) ,\text{ \ \ \ \ \ }x>0,
\end{equation*}%
and as, by assumption, $f$ is positive, it follows that 
\begin{equation*}
\alpha _{1}\left( x\right) =0,\text{ \ \ \ \ \ }x>0.
\end{equation*}%
and, consequently, 
\begin{equation*}
f\left( x\right) =f\left( 1\right) x^{p}e^{\alpha _{x}\left( 1\right) },%
\text{ \ \ \ \ \ }x>0.
\end{equation*}%
Putting, for convenience, $\lambda :\left( 0,\infty \right) \mathbb{%
\rightarrow R}$,%
\begin{equation*}
\lambda \left( x\right) :=\alpha _{x}\left( 1\right) ,\text{ \ \ \ \ \ }x>0,
\end{equation*}%
we have%
\begin{equation}
f\left( x\right) =f\left( 1\right) x^{p}e^{\lambda \left( x\right) },\text{
\ \ \ \ \ }x>0.  \label{canGenHomBF}
\end{equation}%
Inserting this into (\ref{HomBTF1}), we obtain, 
\begin{equation*}
\frac{f\left( 1\right) \left( tx\right) ^{p}e^{\lambda \left( tx\right)
}f\left( 1\right) \left( ty\right) ^{p}e^{\lambda \left( ty\right) }}{%
f\left( 1\right) \left[ t\left( x+y\right) \right] ^{p}e^{\lambda \left(
t\left( x+y\right) \right) }}=t^{p}\frac{f\left( 1\right) x^{p}e^{\lambda
\left( x\right) }f\left( 1\right) y^{p}e^{\lambda \left( y\right) }}{f\left(
1\right) \left( x+y\right) ^{p}e^{\lambda \left( x+y\right) }},\text{ \ \ \
\ \ }x,y,t>0,
\end{equation*}%
that reduces to%
\begin{equation*}
e^{\lambda \left( tx\right) +\lambda \left( ty\right) -\lambda \left(
t\left( x+y\right) \right) }=e^{\lambda \left( x\right) +\lambda \left(
y\right) -\lambda \left( x+y\right) },\text{ \ \ \ \ \ }x,y,t>0,
\end{equation*}%
whence 
\begin{equation*}
\lambda \left( tx\right) +\lambda \left( ty\right) -\lambda \left( t\left(
x+y\right) \right) =\lambda \left( x\right) +\lambda \left( y\right)
-\lambda \left( x+y\right) ,\text{ \ \ \ \ \ }x,y,t>0.
\end{equation*}%
Writing this in the form%
\begin{equation*}
\lambda \left( t\left( x+y\right) \right) -\lambda \left( x+y\right) =\left[
\lambda \left( tx\right) -\lambda \left( x\right) \right] +\left[ \lambda
\left( ty\right) -\lambda \left( y\right) \right] ,\text{ \ \ \ \ \ }x,y,t>0,
\end{equation*}%
we conclude that, for any $t>0$, the function $\omega =\omega _{t}:\left(
0,\infty \right) \rightarrow \mathbb{R}$, defined by 
\begin{equation}
\omega \left( x\right) :=\lambda \left( tx\right) -\lambda \left( x\right) ,%
\text{ \ \ \ \ \ }x>0,  \label{Lambda(tx)-Lambda(x)}
\end{equation}%
is additive. From (\ref{canGenHomBF}) and the assumed regularity of $f$ we
get that $\omega $ is continuous or Lebesgue measurable. Thus, $\omega ,$
being additive and continuous or measurable, is of the form (\cite{Kuczma2},
p. 129, see also \cite{Aczel})%
\begin{equation*}
\omega \left( x\right) =\omega \left( 1\right) x,\text{ \ \ \ \ \ }x>0,
\end{equation*}%
and hence, by (\ref{Lambda(tx)-Lambda(x)}), 
\begin{equation*}
\lambda \left( tx\right) -\lambda \left( x\right) =\left( \lambda \left(
t\right) -\lambda \left( 1\right) \right) x,\text{ \ \ \ \ \ }x,t>0,
\end{equation*}%
whence%
\begin{equation*}
\lambda \left( tx\right) =\lambda \left( x\right) +\left( \lambda \left(
t\right) -\lambda \left( 1\right) \right) x,\text{ \ \ \ \ \ }x,t>0.
\end{equation*}%
The symmetry in $t$ and $x$\ of the left hand side implies that%
\begin{equation*}
\lambda \left( x\right) +\left( \lambda \left( t\right) -\lambda \left(
1\right) \right) x=\lambda \left( t\right) +\left( \lambda \left( x\right)
-\lambda \left( 1\right) \right) t,\text{ \ \ \ \ \ \ \ \ }x,t>0,
\end{equation*}%
whence%
\begin{equation*}
\lambda \left( x\right) \left( 1-t\right) +\lambda \left( 1\right) t=\lambda
\left( t\right) \left( 1-x\right) +\lambda \left( 1\right) x,\text{ \ \ \ \
\ \ \ }x,t>0.
\end{equation*}%
Subtracting $\lambda \left( 1\right) $ from both sides yields%
\begin{equation*}
\lambda \left( x\right) \left( 1-t\right) +\lambda \left( 1\right) t-\lambda
\left( 1\right) =\lambda \left( t\right) \left( 1-x\right) +\lambda \left(
1\right) x-\lambda \left( 1\right) ,\text{ \ \ \ \ \ \ \ }x,t>0,
\end{equation*}%
whence%
\begin{equation*}
\lambda \left( x\right) \left( 1-t\right) -\lambda \left( 1\right) \left(
1-t\right) =\lambda \left( t\right) \left( 1-x\right) -\lambda \left(
1\right) \left( 1-x\right) ,\text{ \ \ \ \ \ \ }x,t>0,
\end{equation*}%
and, consequently, 
\begin{equation*}
\frac{\lambda \left( x\right) -\lambda \left( 1\right) }{1-x}=\frac{\lambda
\left( t\right) -\lambda \left( 1\right) }{1-t},\text{ \ \ \ \ \ \ }x,t>0,%
\text{ }x\neq 1\neq y.
\end{equation*}%
It follows that there exists $c\in \mathbb{R}$ such that%
\begin{equation*}
\frac{\lambda \left( x\right) -\lambda \left( 1\right) }{1-x}=-c,\text{ \ \
\ \ \ \ }x>0,x\neq 1,
\end{equation*}%
whence, 
\begin{equation*}
\lambda \left( x\right) =c\left( x-1\right) +\lambda \left( 1\right) ,\text{
\ \ \ \ \ }x>0,
\end{equation*}%
and we obtain%
\begin{equation*}
\lambda \left( x\right) =cx+d,\text{ \ \ \ \ \ }x>0,
\end{equation*}%
where $d:=\lambda \left( 1\right) -c$. Inserting this function $\lambda $
into (\ref{canGenHomBF}), we obtain 
\begin{equation*}
f\left( x\right) =f\left( 1\right) e^{d}x^{p}\left( e^{c}\right) ^{x},\text{
\ \ \ \ \ }x>0,
\end{equation*}%
whence, setting 
\begin{equation*}
a:=e^{c},\text{ \ \ \ \ \ }b:=f\left( 1\right) e^{d},
\end{equation*}%
we get 
\begin{equation*}
f\left( x\right) =bx^{p}a^{x},\text{ \ \ \ \ \ }x>0,
\end{equation*}%
and 
\begin{equation*}
B_{f}\left( x,y\right) =b\left( \frac{xy}{x+y}\right) ^{p},\text{ \ \ \ \ \ }%
x,y>0,
\end{equation*}%
which proves $(ii)$. The implication $(ii)\Longrightarrow (i)$ is obvious.
\end{proof}

\section{Applications to pre-means}

\begin{definition}
Let $I\subseteq \mathbb{R}$ be an interval and $M:I^{2}\rightarrow \mathbb{R}
$. The $M$ is reflexive, if%
\begin{equation*}
M\left( x,x\right) =x,\ \ \ \ \ x\in I;
\end{equation*}

$M$ is called a \textit{pre-mean in }$I$ (\cite{MatNov2014}), if it is
reflexive and $M\left( I^{2}\right) \subseteq I;$

$M$ is called a \textit{mean in }$I,$ \textit{if }%
\begin{equation*}
\min \left( x,y\right) \leq M\left( x,y\right) \leq \max \left( x,y\right)
,\ \ \ \ \ \ x,y\in I.
\end{equation*}
\end{definition}

\begin{remark}
If $M:I^{2}\rightarrow \mathbb{R}$ is reflexive, then $I\subseteq M\left(
I^{2}\right) $; so a reflexive function is a pre-mean if, and only if, $%
M\left( I^{2}\right) =I.$
\end{remark}

\begin{remark}
Obviously, every mean is a pre-mean, but, in general, not vice versa.
Indeed, the function $M:\left( 0,\infty \right) ^{2}\rightarrow \left(
0,\infty \right) $ defined by%
\begin{equation*}
M\left( x,y\right) =\frac{2x^{2}+y^{2}}{x+2y}
\end{equation*}%
is a pre-mean. Since $M\left( 2,1\right) =3\notin \left[ 2,1\right] $ the
function is not a mean. So $M$ is not increasing in both variables because,
otherwise, it would be a mean.
\end{remark}

\begin{remark}
\label{rem:HomRef} If $M:\left( 0,\infty \right) ^{2}\rightarrow \mathbb{R}$
is reflexive and, for some $p\in \mathbb{R},$ $p$-homogenous, then $p=1$.
\end{remark}

\begin{corollary}
\label{corr:HomogeneityBetaType} Let $f:\left( 0,\infty \right) \rightarrow
\left( 0,\infty \right) $ be a continuous function. Then the following
conditions are equivalent:

(i) the beta-type function $B_{f}$ is a homogeneous pre-mean;

(ii) there exists $a\in \left( 0,\infty \right) $ such that%
\begin{equation}
f\left( x\right) =2xa^{x},\ \ \ \ x>0;  \label{eq:gerHarMean}
\end{equation}

(iii) the beta-type function coincides with the harmonic mean, i.e. 
\begin{equation*}
B_{f}\left( x,y\right) =\frac{2xy}{x+y},\text{ \ \ \ \ \ }x,y>0.
\end{equation*}
\end{corollary}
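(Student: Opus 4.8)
The plan is to prove the cycle of implications $(i)\Rightarrow(ii)\Rightarrow(iii)\Rightarrow(i)$, with Theorem~\ref{theo:HomogeneityBetaType} doing the heavy lifting.

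For $(i)\Rightarrow(ii)$ I would first note that a homogeneous pre-mean is, by definition, reflexive, so Remark~\ref{rem:HomRef} pins the degree of homogeneity down to $p=1$. Since $f$ is continuous, Theorem~\ref{theo:HomogeneityBetaType} applies with $p=1$ and yields $a,b\in(0,\infty)$ such that $f(x)=bxa^{x}$ and $B_{f}(x,y)=b\frac{xy}{x+y}$ for all $x,y>0$. It then remains only to determine $b$: reflexivity of $B_{f}$ gives $x=B_{f}(x,x)=b\frac{x^{2}}{2x}=\frac{b}{2}x$ for every $x>0$, hence $b=2$, which is exactly $(ii)$.

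The implication $(ii)\Rightarrow(iii)$ is a direct substitution into the definition of $B_{f}$: with $f(x)=2xa^{x}$ one computes $B_{f}(x,y)=\frac{2xa^{x}\cdot 2ya^{y}}{2(x+y)a^{x+y}}=\frac{2xy}{x+y}$, the harmonic mean. For $(iii)\Rightarrow(i)$ it suffices to recall that $H(x,y)=\frac{2xy}{x+y}$ takes values in $(0,\infty)$, satisfies $H(x,x)=x$ (so it is reflexive, in fact a mean, hence a pre-mean in $(0,\infty)$), and is positively homogeneous of degree $1$; thus $B_{f}=H$ is a homogeneous pre-mean.

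I do not anticipate any real obstacle: the corollary is essentially a specialization of the main theorem. The two points to get right are (a) invoking Remark~\ref{rem:HomRef} to force $p=1$ before Theorem~\ref{theo:HomogeneityBetaType} is used, and (b) using reflexivity once more to fix the constant $b=2$. One could alternatively route the argument as $(i)\Rightarrow(iii)\Rightarrow(ii)\Rightarrow(i)$, but the chain above keeps each step to one short computation.
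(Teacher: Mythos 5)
Your proposal is correct and follows essentially the same route as the paper: invoke Remark~\ref{rem:HomRef} to force $p=1$, apply Theorem~\ref{theo:HomogeneityBetaType} to get $f(x)=bxa^{x}$, and use reflexivity to pin down $b=2$ (the paper evaluates at $x=2$ where you use a general $x$, an immaterial difference). The remaining implications are handled the same way in both arguments.
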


\begin{proof}
Assume $(i)$. By Theorem \ref{theo:HomogeneityBetaType} and remark \ref%
{rem:HomRef}, its generator $f$ is of the form%
\begin{equation*}
f\left( x\right) =bxa^{x},\ \ \ \ x>0,
\end{equation*}%
for some $a,b\in \left( 0,\infty \right) $. Since $B_{f}$ is reflexive, that
is $B_{f}\left( x,x\right) =x$ for all $x\in \left( 0,\infty \right) .$
Substituting here $x=2$ and using Theorem \ref{theo:HomogeneityBetaType}
(ii), yields 
\begin{equation*}
2=B_{f}\left( 2,2\right) =\frac{f\left( 2\right) f\left( 2\right) }{f\left(
2+2\right) }=\frac{b\cdot 2\cdot 2}{2+2}=b,
\end{equation*}%
whence we get $\left( \ref{eq:gerHarMean}\right) ,$ which proves (ii).

Assume $(ii).$ From $\left( \ref{eq:gerHarMean}\right) $ and the definition
of $B_{f}$ we get (iii).

The implication $(iii)\Longrightarrow (i)$ is obvious.
\end{proof}

Because every homogeneous quasi-arithmetic mean is a power mean (\cite%
{AcyDho}, p. 249), our result implies the following

\begin{corollary}
A homogeneous beta-type function is a quasi-arithmetic mean if, and only if,
it is the harmonic mean.
\end{corollary}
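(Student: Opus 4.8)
The plan is to obtain this immediately from Corollary~\ref{corr:HomogeneityBetaType}, via the elementary observation that a quasi-arithmetic mean is in particular a mean, hence a pre-mean.

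\emph{The ``if'' direction.} I would recall that the harmonic mean $H(x,y)=\frac{2xy}{x+y}$ is the quasi-arithmetic mean generated by the continuous, strictly decreasing bijection $\varphi\colon(0,\infty)\to(0,\infty)$, $\varphi(t)=\frac{1}{t}$; indeed
\begin{equation*}
\varphi^{-1}\!\left(\frac{\varphi(x)+\varphi(y)}{2}\right)=\frac{2}{\frac{1}{x}+\frac{1}{y}}=\frac{2xy}{x+y},\qquad x,y>0 .
\end{equation*}
Moreover $H$ is $1$-homogeneous, and by Corollary~\ref{corr:HomogeneityBetaType} it is a beta-type function (with generator $f(x)=2xa^{x}$, $a>0$). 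Hence $H$ is a homogeneous beta-type function that is a quasi-arithmetic mean.

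\emph{The ``only if'' direction.} Suppose $B_{f}$ is a homogeneous beta-type function that is a quasi-arithmetic mean. Being a mean, $B_{f}$ is reflexive and satisfies $B_{f}\bigl((0,\infty)^{2}\bigr)\subseteq(0,\infty)$, i.e.\ it is a homogeneous pre-mean; Corollary~\ref{corr:HomogeneityBetaType} then yields $B_{f}=H$. Alternatively, one may use the quoted fact (\cite{AcyDho}, p.~249) that a homogeneous quasi-arithmetic mean is a power mean $P_{r}$, $P_{r}(x,y)=\left(\frac{x^{r}+y^{r}}{2}\right)^{1/r}$ for some $r\in\mathbb{R}$ (with $P_{0}$ the geometric mean), and combine it with Theorem~\ref{theo:HomogeneityBetaType}: from $B_{f}(x,y)=b\left(\frac{xy}{x+y}\right)^{p}$ and $P_{r}(x,x)=x$ one gets $b\left(\frac{x}{2}\right)^{p}=x$ for all $x>0$, forcing $p=1$ and $b=2$, so $B_{f}(x,y)=\frac{2xy}{x+y}=H$ (which is $P_{-1}$).

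There is essentially no real obstacle here: the statement is a direct consequence of Corollary~\ref{corr:HomogeneityBetaType}, the only points worth recording being that quasi-arithmetic means are pre-means and that $H$ is itself a quasi-arithmetic mean. In the power-mean variant, the reflexivity identity $P_{r}(x,x)=x$ holds for every $r\in\mathbb{R}$, including $r=0$, so no case distinction on the generator of the quasi-arithmetic mean is needed.
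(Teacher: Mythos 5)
Your proof is correct. The paper itself gives no written proof of this corollary; it merely prefaces the statement with the remark that every homogeneous quasi-arithmetic mean is a power mean (citing Acz\'el--Dhombres) and says the claim then follows from the main result. Your primary route is slightly more economical than that: you observe that a quasi-arithmetic mean $\varphi^{-1}\bigl(\tfrac{\varphi(x)+\varphi(y)}{2}\bigr)$ is automatically internal, hence a mean, hence a pre-mean, so a homogeneous beta-type function which is quasi-arithmetic is already a homogeneous pre-mean and Corollary~\ref{corr:HomogeneityBetaType} applies directly --- the power-mean characterization is not needed at all. Your alternative argument is essentially the paper's intended one, and you carry it out correctly: by Theorem~\ref{theo:HomogeneityBetaType} one has $B_{f}(x,y)=b\left(\frac{xy}{x+y}\right)^{p}$, and reflexivity of the power mean forces $b\left(\frac{x}{2}\right)^{p}=x$ for all $x>0$, hence $p=1$, $b=2$, and $B_{f}=H=P_{-1}$. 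The converse direction (that $H$ is both a homogeneous beta-type function, with generator $f(x)=2xa^{x}$, and the quasi-arithmetic mean generated by $\varphi(t)=1/t$) is also handled properly. In short: both of your routes are valid, the first buys independence from the cited power-mean theorem, and the second matches what the authors had in mind.
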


For another result connecting harmonic mean and the Euler Gamma function see 
\cite{Alzer1996}.

\section{Cauchy differences and a corollary}

Applying our main result, we obtain the following

\begin{corollary}
Let $g:\left( 0,\infty \right) \rightarrow \mathbb{R}$ be an arbitrary
continuous function and let $p\in \mathbb{R}$. The following conditions are
equivalent:

(i) the Cauchy difference is $p\log t$-homogeneous, that is%
\begin{equation}
C_{g}\left( tx,ty\right) =C_{g}\left( x,y\right) +p\log t,\text{ \ \ \ \ \ }%
x,y,t>0;  \label{plogHom}
\end{equation}%
(ii) there exist $c,d\in \mathbb{R}$ such that 
\begin{equation*}
g\left( x\right) =cx+d-p\log t,\text{ \ \ \ \ \ }x>0
\end{equation*}%
and%
\begin{equation*}
C_{g}\left( x,y\right) =\log \left( \frac{xy}{x+y}\right) ^{p}-d,\text{ \ \
\ \ \ }x,y>0.
\end{equation*}
\end{corollary}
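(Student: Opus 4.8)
The plan is to derive this corollary from Theorem~\ref{theo:HomogeneityBetaType} via the identity $B_{f}=\exp \circ \left( -C_{\log \circ f}\right) $ recorded in the Introduction. Given a continuous $g:\left( 0,\infty \right) \rightarrow \mathbb{R}$, I would introduce the generator $f:=\exp \circ \left( -g\right) :\left( 0,\infty \right) \rightarrow \left( 0,\infty \right) $, which is again continuous. Since $\log \circ f=-g$ and the Cauchy difference is additive in its argument function, we get $C_{\log \circ f}=-C_{g}$, hence $B_{f}=\exp \circ C_{g}$, i.e. $B_{f}\left( x,y\right) =e^{C_{g}\left( x,y\right) }$ for all $x,y>0$.

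Next I would transfer the homogeneity conditions across this identity. As $\exp $ is a bijection of $\mathbb{R}$ onto $\left( 0,\infty \right) $, applying $\log $ shows that the $p\log t$-homogeneity equation~(\ref{plogHom}) holds for all $x,y,t>0$ if and only if $e^{C_{g}\left( tx,ty\right) }=t^{p}e^{C_{g}\left( x,y\right) }$ for all $x,y,t>0$, that is, if and only if $B_{f}$ is $p$-homogeneous. Thus (i) is equivalent to condition (i) of Theorem~\ref{theo:HomogeneityBetaType} applied to $f$, which is legitimate since $f$ is continuous.

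By Theorem~\ref{theo:HomogeneityBetaType}, this is in turn equivalent to the existence of $a,b\in \left( 0,\infty \right) $ with $f\left( x\right) =bx^{p}a^{x}$ for $x>0$ and $B_{f}\left( x,y\right) =b\left( xy/\left( x+y\right) \right) ^{p}$ for $x,y>0$. Unwinding $f=\exp \circ \left( -g\right) $, the equation $e^{-g\left( x\right) }=bx^{p}a^{x}$ gives $g\left( x\right) =-\left( \log a\right) x-\log b-p\log x$, so setting $c:=-\log a$ and $d:=-\log b$ — which run over all of $\mathbb{R}$ as $a,b$ run over $\left( 0,\infty \right) $ — yields the claimed affine-plus-logarithm form of $g$; and $e^{C_{g}\left( x,y\right) }=b\left( xy/\left( x+y\right) \right) ^{p}$ gives $C_{g}\left( x,y\right) =\log b+\log \left( xy/\left( x+y\right) \right) ^{p}=\log \left( xy/\left( x+y\right) \right) ^{p}-d$. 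Conversely, any $g$ of this form corresponds via $f=\exp \circ \left( -g\right) $ to $f\left( x\right) =bx^{p}a^{x}$ with $a=e^{-c}$ and $b=e^{-d}$; Theorem~\ref{theo:HomogeneityBetaType} then makes $B_{f}$ $p$-homogeneous, and re-using the equivalence of the previous paragraph (or substituting the explicit $g$ into $C_{g}$ and computing directly) gives~(\ref{plogHom}).

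Since the whole argument is just a change of variables through $\exp $/$\log $, I do not anticipate a genuine obstacle; the only delicate points are the bookkeeping of signs and observing that the resulting $c,d$ are unrestricted reals, matching the absence of sign constraints in (ii). I also note that, for the pieces to fit together, the formula for $g$ in the statement of (ii) should read $g\left( x\right) =cx+d-p\log x$.
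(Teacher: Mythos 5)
Your proof is correct and follows essentially the same route as the paper: both reduce the corollary to Theorem~\ref{theo:HomogeneityBetaType} by exponentiating $g$, except that you take $f=\exp \circ \left( -g\right) $ (so that $B_{f}$ is $p$-homogeneous) where the paper takes $f=\exp \circ g$ (so that $B_{f}$ is $\left( -p\right) $-homogeneous), a purely cosmetic difference that in fact keeps the signs cleaner. You are also right that the formula for $g$ in condition (ii) contains a typo and should read $g\left( x\right) =cx+d-p\log x$.
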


\begin{proof}
Setting $f:=\exp \circ g,$ we observe that condition $(i)$ is equivalent to%
\begin{equation*}
B_{f}\left( tx,ty\right) =t^{-p}B_{f}\left( x,y\right) ,\text{ \ \ \ \ }%
x,y,t>0,
\end{equation*}%
since, using the definition of beta-type function, we have, for all $x,y>0,$%
\begin{equation*}
e^{g\left( tx\right) +g\left( ty\right) -g\left( t\left( x+y\right) \right)
}=t^{-p}e^{g\left( x\right) +g\left( y\right) -g\left( x+y\right) }.
\end{equation*}%
Taking the logarithm of both sides, we indeed obtain%
\begin{equation*}
-C_{g}\left( tx,ty\right) =\log t^{-p}-C_{g}\left( x,y\right) ,\text{ \ \ \
\ }x,y>0,
\end{equation*}%
and thus $g$ satisfies $\left( \ref{plogHom}\right) $.

By Theorem \ref{theo:HomogeneityBetaType}, there exist $a,b>0$%
\begin{equation*}
f\left( x\right) =bx^{-p}a^{x},\text{ \ \ \ \ }x>0.
\end{equation*}%
Thus, by the definition of $f$, we get, for all $x>0,$%
\begin{equation*}
g\left( x\right) =\log b+p\log x+x\log a;
\end{equation*}%
whence, putting $c:=\log a$ and $d:=\log b,$ we obtain,%
\begin{equation*}
g\left( x\right) =cx+d+p\log x,\text{ \ \ \ \ }x>0,
\end{equation*}%
and consequently, for all $x,y>0$,%
\begin{eqnarray*}
C_{g}\left( x,y\right) &=&g\left( x+y\right) -g\left( x\right) -g\left(
y\right) \\
&=&\log \left( \frac{xy}{x+y}\right) ^{p}-d,
\end{eqnarray*}%
which proves the implication $(i)\Longrightarrow (ii).$

The second implication is easy to verify.
\end{proof}

In connection with Cauchy differences and harmonic mean, let us note that
Heuvers result \cite{Heuvers} (see also Kannappan \cite{Kannappan}, p. 31)
can be reformulated as

\begin{remark}
\label{rem:CauchyDifferenceHarmonicMean} The Cauchy difference of a function 
$f:\left( 0,\infty \right) \rightarrow \mathbb{R}$ satisfies the functional
equation%
\begin{equation}
C_{f}\left( x,y\right) =f\left( \frac{2}{H\left( x,y\right) }\right) ,\text{
\ \ \ \ }x,y>0  \label{LogHeuvers}
\end{equation}%
if, and only if, $f$ is a logarithmic function, i.e.%
\begin{equation*}
f\left( xy\right) =f\left( x\right) +f\left( y\right) ,\text{ \ \ \ \ \ }%
x,y>0.
\end{equation*}
\end{remark}

\bigskip

\end{document}